\newcommand{\U}{{\mathcal U}}
\newcommand{\C}{{\mathbb C}}
\newcommand{\Z}{{\mathbb Z}}
\newcommand{\N}{{\mathbb N}}
\newcommand{\cL}{{\mathbb L}}
\newcommand{\W}{{\mathcal W}}
\newcommand{\strat}{{\mathfrak S}}
\newcommand{\Proj}{{\mathbb P}}
\newcommand{\hyp}{{\mathbb H}}
\newcommand{\arrow}[1]{\stackrel{#1}{\longrightarrow}}
\newcommand{\Adot}{\mathbf A^\bullet}
\newcommand{\Pdot}{\mathbf P^\bullet}
\newcommand{\p}{{\mathbf p}}
\newcommand{\ms}{{\operatorname{SS}}}
\newcommand{\relcon}{{\overline{T^*_f\U}}}
\newtheorem{defn0}{Definition}[section]
\newtheorem{prop0}[defn0]{Proposition}
\newtheorem{conj0}[defn0]{Conjecture}
\newtheorem{thm0}[defn0]{Theorem}
\newtheorem{lem0}[defn0]{Lemma}
\newtheorem{corollary0}[defn0]{Corollary}
\newtheorem{example0}[defn0]{Example}
\newtheorem{remark0}[defn0]{Remark}
\newtheorem{question0}[defn0]{Question}
\newenvironment{prop}{\begin{prop0}}{\end{prop0}}
\newenvironment{thm}{\begin{thm0}}{\end{thm0}}
\newcommand{\propref}[1]{Proposition~\ref{#1}}
\newcommand{\thmref}[1]{Theorem~\ref{#1}}
\title[A Result on Relative Conormal Spaces]{A Result on Relative Conormals Spaces}
\subjclass[2010]{32B15, 32C18, 32B10, 32S25, 32S15, 32S55}
\keywords{relative conormal space, nearby cycles, vanishing cycles}
\author{David B. Massey}
\date{}
\begin{document}

\begin{abstract} We prove a result on the relationship between the relative conormal space of an analytic function $f$ on affine space and the relative conormal space of $f$ restricted to a hyperplane slice, at a point where the relative conormal space of $f$ is ``microlocally trivial''.
\end{abstract}

\maketitle




\section{Introduction} 

Suppose that $\U$ is a connected, open subset of $\C^{n+1}$; we use $(z_0, \dots, z_n)$ as coordinates on $\U$. Let $p\in \U$, and let $f:\U\rightarrow\C$ be a non-constant complex analytic function. We will assume, without loss of generality, that $f(p)=0$ and $z_0(p)=0$, i.e., $p\in V(f, z_0)$.

Consider the cotangent bundle of $\U$, 
$$\pi: T^*\U\cong \U\times \C^{n+1}\rightarrow\U.$$
 For all $x\in\U$, we use $(d_xz_0, \dots, d_xz_n)$ as an ordered basis for the fiber $(T^*\U)_x:=\pi^{-1}(x)$.
 
The (closure of the) relative conormal space of $f$ in $\U$, $\relcon\subseteq T^*\U$, is a well-known object in the study of the singularities of the hypersurface $V(f)$; see, for instance,  \cite{teissiervp2}. The relative conormal space is given by
 $$
\relcon:=\overline{\big\{(x, \eta)\in T^*\U\ | \ \eta(\operatorname{ker}(d_xf))\equiv0\big\}}.
 $$
Note that we have not done what is usually done, in that we have not explicitly removed the critical locus, $\Sigma f$, of $f$ before closing; this does not affect the closure. We remark that each fiber of $\relcon$ over $\U$ is $\C$-conic, i.e., closed under scalar multiplication, but need not be closed under addition over a point in the critical locus of $f$.

\bigskip

For all $x\in V(z_0)$, there is a canonical map 
$$\hat r_x: (T^*\U)_x\rightarrow (T^*(V(z_0)))_x
$$
given by 
$$
\hat r_x(\eta):=\eta_{|_{T_xV(z_0)}},
$$
that is, 
$$\hat r_x(a_0d_xz_0+\cdots+a_nd_xz_n) = a_1d_xz_1+\cdots+a_nd_xz_n.
$$

\bigskip

We prove just one theorem in this paper:

\begin{thm} For all $x\in V(z_0)$, if $d_xz_0\not\in \big(\relcon\big)_x$, then $\hat r_x$ induces a surjection
$$
r_x: \big(\relcon\big)_x\rightarrow \big(\overline{T^*_{f_{|_{V(z_0)}}}V(z_0)}\big)_x,
$$
such that $r_x^{-1}(0)=0$.
\end{thm}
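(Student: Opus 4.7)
The plan is to prove three claims in sequence: $r_x$ is well-defined (i.e., $\hat r_x$ sends $(\relcon)_x$ into $(\overline{T^*_{f|_{V(z_0)}}V(z_0)})_x$), it is surjective, and its kernel is trivial. The main tools are the closedness and $\C$-conicity of the fibers of both relative conormal spaces, together with the fact that each such space is the closure of $\{(y,c\,d_yf)\}$, respectively $\{(y,c\,d_y(f|_{V(z_0)}))\}$, taken over the corresponding non-critical locus and $c\in\C$.

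The kernel statement is immediate. If $\eta = a_0 d_xz_0 + \cdots + a_n d_xz_n \in (\relcon)_x$ satisfies $\hat r_x(\eta) = 0$, then $a_1 = \cdots = a_n = 0$, so $\eta\in\C d_xz_0\cap(\relcon)_x$; this intersection is either $\{0\}$ or all of $\C d_xz_0$, and the hypothesis forces the former. For surjectivity, I would take $\eta'\in(\overline{T^*_{f|_{V(z_0)}}V(z_0)})_x$ and express it as $\lim c_i d_{y_i}(f|_{V(z_0)})$ with $y_i\in V(z_0)\setminus\Sigma(f|_{V(z_0)})$. Such a $y_i$ automatically lies outside $\Sigma f$, so $\tilde\eta_i := c_i d_{y_i} f \in (\relcon)_{y_i}$ and $\hat r_{y_i}(\tilde\eta_i)\to\eta'$. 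Now examine the $d_{y_i}z_0$-coefficient $c_i(\partial f/\partial z_0)(y_i)$. If it is bounded, a subsequence of $\tilde\eta_i$ converges by closedness of $\relcon$ to some $\eta\in(\relcon)_x$ with $\hat r_x(\eta) = \eta'$. If it is unbounded, dividing $\tilde\eta_i$ by this coefficient yields a sequence in $\relcon$ whose $d_{y_i}z_0$-coefficient is identically $1$ and whose other coefficients tend to $0$, so by closedness and $\C$-conicity the limit $d_xz_0$ would lie in $(\relcon)_x$, contradicting the hypothesis.

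Well-definedness is the most delicate piece, because a sequence $(x_i, c_i d_{x_i} f)\to (x,\eta)$ from the closure defining $\relcon$ need not lie over $V(z_0)$. My approach is to use the curve selection lemma to replace the sequence by a complex-analytic arc $\gamma(t) = (x(t), c(t)\, d_{x(t)} f)$ in $\relcon$ with $\gamma(0) = (x,\eta)$ and $x(t)\notin\Sigma f$ for small $t\ne 0$. If $z_0(x(t))\equiv 0$, then $x(t)\in V(z_0)$, the images $\hat r_{x(t)}(\gamma(t)) = c(t)\, d_{x(t)}(f|_{V(z_0)})$ lie in $\overline{T^*_{f|_{V(z_0)}}V(z_0)}$, and their limit is $\hat r_x(\eta)$. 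Otherwise, project $x(t)$ onto $V(z_0)$ by setting its first coordinate to zero, obtaining $\tilde x(t) = (0, x_1(t),\dots, x_n(t))$, and compare $c(t)\, d_{\tilde x(t)}(f|_{V(z_0)})$ with $\hat r_x(\eta)$; a Taylor expansion of $\partial f/\partial z_j$ in the $z_0$-direction shows the discrepancy is $O(c(t)\, z_0(x(t)))$.

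The main obstacle is therefore the estimate $c(t)\, z_0(x(t))\to 0$. In Puiseux orders, with $\operatorname{ord}_t c(t) = -M$ and $m_j := \operatorname{ord}_t(\partial f/\partial z_j)(x(t))$, the existence of $\lim c(t)\, d_{x(t)} f = \eta$ forces $m_j\ge M$ for all $j$; applied directly to the arc, the hypothesis rules out the scenario $m_j > m_0$ for every $j\ge 1$ (such a scenario would exhibit a rescaling of $\gamma$ with limit $d_xz_0$, contradicting $d_xz_0\notin(\relcon)_x$), so $M\le m_0$. The comparison of $M$ with $k := \operatorname{ord}_t z_0(x(t))$ is the crux, and I would argue by contradiction: if $c(t)\, z_0(x(t))$ failed to tend to zero, then after a suitable sub-parametrization $k = M$, and rescaling $\gamma(t)$ by $z_0(x(t))^{-1}$ via $\C$-conicity would yield a curve in $\relcon$ with a nonzero limit $\alpha\in(\relcon)_x$; a further analysis of $\alpha$ combined with the hypothesis should either deliver the desired estimate or force $d_xz_0\in(\relcon)_x$.
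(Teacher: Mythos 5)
Your kernel and surjectivity arguments are both correct, and surjectivity is indeed a clean elementary version of the ``easy'' direction: lift $c_i\,d_{y_i}(f|_{V(z_0)})$ to $c_i\,d_{y_i}f\in\relcon$, then use boundedness of the $dz_0$-coefficient or, if it is unbounded, rescale to exhibit $d_xz_0\in\big(\relcon\big)_x$, contrary to hypothesis. But the well-definedness step --- which is exactly the part the paper flags as hard and proves via microsupports --- has a real gap. Your intermediate claim $c(t)\,z_0(x(t))\to 0$ is false under the hypothesis. Take $\U=\C^3$, $f=z_1z_2$, $x=\0$: then $\big(\relcon\big)_{\0}=\{0\}\times\C^2$, so $d_{\0}z_0\notin\big(\relcon\big)_{\0}$; yet along $x(t)=(t,t^2,t^2)$ with $c(t)=t^{-2}$ one has $c(t)\,d_{x(t)}f=(0,1,1)\to\eta\neq 0$ while $c(t)\,z_0(x(t))=t^{-1}\to\infty$. (The conclusion survives in this example only because $\partial^2 f/\partial z_0\partial z_j\equiv 0$, i.e., the true discrepancy $c(t)\big(\partial f/\partial z_j(x(t))-\partial f/\partial z_j(\tilde x(t))\big)$ vanishes for a reason your $O(c(t)\,z_0(x(t)))$ bound does not see.) Your recovery strategy also fails: both $k=\operatorname{ord}_t z_0(x(t))$ and $M=-\operatorname{ord}_t c(t)$ rescale by $\ell$ under $t\mapsto t^{\ell}$ or any Puiseux substitution, so their ratio is a reparametrization invariant and you cannot ``arrange'' $k=M$ when $k<M$; and when $k<M$, dividing the fiber component of $\gamma$ by $z_0(x(t))$ sends the limit to $0$, which carries no information. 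The closing sentence is a conjecture, not an argument.

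For comparison, the paper does not attempt to transport a single arc from $\U$ into $V(z_0)$. It proves in \propref{prop:onlyprop} that $\psi_{f_0}[-1]\Z^\bullet_{V(z_0)}[n]\cong\psi_{\hat z_0}[-1]\psi_f[-1]\Pdot$ near $x$, using the emptiness of the relative polar set and the vanishing of $\phi_{\hat z_0}[-1]\psi_f[-1]\Pdot$, and then equates microsupports via the Brian\c{c}on--Maisonobe--Merle theorem and Lemma~5.8 of \cite{enrichpolar}; that equality of microsupports is what forces every element of $\big(\relcon\big)_x$ to land, after $\hat r_x$, in $\big(\overline{T^*_{f_{|_{V(z_0)}}}V(z_0)}\big)_x$. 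If you want a curve-level proof, the question you must actually answer is not whether $c(t)\,z_0(x(t))\to 0$, but why $c(t)\,d_{\tilde x(t)}(f|_{V(z_0)})$ converges to $\hat r_x(\eta)$ --- or, failing that, why some \emph{other} curve in $T^*_{f|_{V(z_0)}}V(z_0)$ limits to $(x,\hat r_x(\eta))$ --- and your write-up supplies no mechanism for either.
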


The proofs that $r_x^{-1}(0)=0$ and that $r_x$ is a surjection are easy, but that everything in $\big(\relcon\big)_x$ maps by $r_x$ into $\big(\overline{T^*_{f_{|_{V(z_0)}}}V(z_0)}\big)_x$ is difficult, and our argument heavily uses the derived category and the microsupport.  Our attempts to find a more direct proof, or to find this result in the existing literature, have failed.

\medskip

And so we must begin by looking at derived category definitions and results.

\section{Derived Category Results} 

We must begin by recalling a large number of definitions and notations.

\medskip

Consider a (reduced) complex analytic subspace $X$ of some open subset $\W$ of some affine space $\C^N$.

We will look at objects in the derived category $D^b_c(X)$ of bounded, constructible complexes of sheaves of $\Z$-modules on various spaces. References for the notation and results that we will use are \cite{kashsch}, \cite{dimcasheaves}, \cite{singenrich}, \cite{micromorse}, and \cite{ipaaffine}.

\smallskip

For a complex submanifold $M\subseteq \W$, we let $T^*_M\W$ denote that conormal space of $M$ in $\W$; this is the subspace of the cotangent space $T^*\W$ given by
$$
T^*_M\W:=\big\{(p, \eta)\in T^*\W \ | \ \eta(T_pM)\equiv 0\big\}.
$$
We will usually be interested in the closure $\overline{T^*_M\W}$ in $T^*\W$.

\smallskip

Suppose that $\Adot\in D^b_c(X)$ and that $\strat$ is a Whitney stratification (with connected strata) of $X$ with respect to which $\Adot$ is constructible.  Then, as described by Goresky and MacPherson \cite{stratmorse}, to each stratum $S$ in $\strat$, there are an associated normal slice $\N_S$ and complex link $\mathbb L_S$. The isomorphism-types of the hypercohomology modules $\hyp^*(\N_S, \mathbb L_S; \Adot)$ are independent of the choices made in defining the normal slice and complex link; these are the {\it Morse modules} of $S$, with respect to $\Adot$. We let $m_S^k(\Adot):=\hyp^{k-\dim S}(\N_S, \cL_S; \Adot)$. 

The union of the closures of conormal spaces to strata with non-zero Morse modules is the {\it microsupport}, $\ms(\Adot)$, of $\Adot$, as defined by Kashiwara and Schapira in \cite{kashsch}, i.e.,   
$$\ms(\Adot):= \bigcup_{m^*_S(\Adot)\neq 0}\overline{T^*_S\W}.
$$
(For this characterization of the microsupport, see Theorem 4.13 of \cite{micromorse}.) The microsupport is independent of the choice of Whitney stratification.


\vskip 0.2in

Now we return to the notation from the introduction:  $\U$ is a connected, open subset of $\C^{n+1}$, $(z_0, \dots, z_n)$ are coordinates on $\U$, $f:\U\rightarrow\C$ is a non-constant complex analytic function, and $p\in V(f, z_0)$. We now fix $\Pdot$ as being the shifted constant sheaf $\Z^\bullet_\U[n+1]$.

We will use the (shifted) {\it nearby and vanishing cycles functors}, $\psi_f[-1]$ and $\phi_f[-1]$, respectively, from $D^b_c(\U)$ to $D^b_C(V(f))$. Let $F_{f, p}$ denote the Milnor fiber of $f$ at $p$, and let $f_0:=f_{|_{V(z_0)}}$. Let $\hat z_0:={z_0}_{|_{V(f)}}$.

We define the inclusions $$m: V(z_0)\hookrightarrow\U, \hskip 0.2in l: \U\backslash V(z_0)\hookrightarrow\U, \ \textnormal{ and }\ \hat m: V(f,z_0)\hookrightarrow V(f).$$

Then, $H^k(\psi_f[-1]\Pdot)_\p\cong H^{k+n}(F_{f, p}; \Z)$ and $H^k(\phi_f[-1]\Pdot)_\p\cong \tilde H^{k+n}(F_{f, p}; \Z)$, where $\widetilde H$ denotes reduced cohomology. Furthermore,
$$
H^k(\psi_f[-1]l_!l^!\Pdot)_\p\cong H^{k+n}(F_{f, p}, F_{f_0, p}; \Z)
$$
and
$$
\hat m^*\psi_f[-1]m_*m^*[-1]\Pdot \cong \psi_{f_0}[-1]\Z^\bullet_{V(z_0)}[n].
$$

\bigskip

It is a fundamental result of Brian\c con, Maisonobe, and Merle that:

\begin{thm}\label{thm:bmm}\textnormal{(\cite{bmm}, 3.4.2)} 
$$
\ms(\psi_f[-1]\Pdot)=\relcon\cap\big(V(f)\times \C^{n+1}\big)=\relcon\cap \pi^{-1}f^{-1}(0).
$$

More generally, if $$\ms(\Adot)=\bigcup_{S\in \strat}\overline{T^*_S\U},$$ then
$$
\ms(\psi_f[-1]\Adot) = \Big(\bigcup_{S\not\subseteq V(f)}\overline{T^*_{f_{|_{S}}}}\U\Big)\cap \big(V(f)\times \C^{n+1}\big).
$$
\end{thm}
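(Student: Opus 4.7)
The strategy is to establish both inclusions in the general (stratified) formula; the basic case falls out by taking the trivial stratification $\strat = \{\U\}$, for which $\overline{T^*_{f|_\U} \U} = \relcon$ by definition. Throughout, fix a Whitney stratification $\strat$ of $\U$ refining the partition by $V(f)$, making $\Adot$ constructible, and satisfying Le's Thom $a_f$ condition along $V(f)$ (existence is standard). Note that strata $S \subseteq V(f)$ contribute only zero-section covectors to $\overline{T^*_{f|_S}\U}$, and these are subsumed by closures of the conormals of nearby strata not contained in $V(f)$, so we may focus on $S \not\subseteq V(f)$.

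\textbf{Easy containment.} Since $\psi_f[-1]\Adot$ is supported on $V(f)$, its microsupport lies in $V(f)\times \C^{n+1} = \pi^{-1}f^{-1}(0)$. The conormal bound comes from the Kashiwara-Schapira estimate for nearby cycles (\cite{kashsch}, Section 8.6; see also \cite{singenrich}), which says that specialization along $V(f)$ contracts $\ms(\Adot)$ in the $df$-direction. On each stratum $S \not\subseteq V(f)$, the surviving covectors are precisely those annihilating $\ker d(f|_S)$, which is the definition of $T^*_{f|_S}\U$; passing to closure and using the $a_f$ hypothesis controls limit directions along strata contained in $V(f)$.

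\textbf{Hard containment.} Given $(x, \eta)$ in the right-hand side with $\eta \in \overline{T^*_{f|_S}\U}_x$ and $S \not\subseteq V(f)$, I need to exhibit a stratum $T$ of a Whitney stratification of $V(f)$ along which $\psi_f[-1]\Adot$ is constructible, with $x\in \overline T$, $\eta \in \overline{T^*_T V(f)}$, and $m^*_T(\psi_f[-1]\Adot) \ne 0$. Approximate $\eta$ by $\eta_0 \in T^*_{f|_S}\U$ over a nearby point $x_0 \in S$, choose a hyperplane slice $H$ through $x$ with $T_xH$ close to $\ker\eta$, and compute the Morse module via the relative Milnor-fiber pair $(F_{f,x}, F_{f|_H, x})$ with coefficients in $\Adot$; this is the natural iteration of the identity $\hat m^*\psi_f[-1] m_*m^*[-1]\Pdot \cong \psi_{f_0}[-1]\Z^\bullet_{V(z_0)}[n]$ recalled above. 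Combining with $m^*_S(\Adot)\ne 0$ and stratified Morse theory produces the desired non-vanishing at $\eta_0$.

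\textbf{Main obstacle.} The real difficulty is not the interior $T^*_{f|_S}\U$ but the boundary $\overline{T^*_{f|_S}\U} \setminus T^*_{f|_S}\U$: at a limit covector $\eta$ the Milnor data can degenerate, and one must show that Morse modules remain non-zero in the limit. The resolution is a semi-continuity argument leveraging the Thom $a_f$ condition: $a_f$ guarantees that limit covectors annihilate limit tangent spaces of level sets of $f|_S$, so the Morse-module computation survives specialization to the boundary stratum $T$. This compatibility of nearby cycles with the closure of relative conormals is the technical core of Brian\c con-Maisonobe-Merle and the reason the proof in \cite{bmm} is framed in terms of $\vdual$-modules and the $V$-filtration rather than through direct Milnor-fiber bookkeeping.
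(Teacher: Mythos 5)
The paper does not prove this statement at all: it is quoted from Brian\c con--Maisonobe--Merle (\cite{bmm}, 3.4.2), so there is no internal argument to compare yours with --- in the context of this paper the only ``proof'' is the citation. Judged on its own terms, your proposal is a sketch whose decisive step is missing, and it is exactly the step you label the main obstacle. For the inclusion $\supseteq$ you must show that a covector $(x,\eta)$ with $x\in V(f)$ and $\eta\in\big(\overline{T^*_{f_{|_S}}\U}\big)_x$, in particular a boundary covector of $\overline{T^*_{f_{|_S}}\U}$, lies in $\ms(\psi_f[-1]\Adot)$. Your plan is to establish non-vanishing at nearby interior covectors $\eta_0\in T^*_{f_{|_S}}\U$ over points $x_0\in S$ and then pass to the limit by ``semi-continuity'' plus the $a_f$ condition. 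But once $\strat$ refines the partition by $V(f)$, a stratum $S\not\subseteq V(f)$ is disjoint from $V(f)$, whereas $\ms(\psi_f[-1]\Adot)$ lies over $V(f)$: the interior covectors at which you argue non-vanishing are not even candidate points of $\ms(\psi_f[-1]\Adot)$, so neither closedness of the microsupport nor any semi-continuity of Morse modules transports anything to the limit covector $(x,\eta)$. Proving that the Morse module of $\psi_f[-1]\Adot$ at a generic such $\eta$ (computed, say, from the pair $(F_{f,x},F_{f_{|_H},x})$ with coefficients in $\Adot$) is non-zero is precisely the content of the theorem, and your final paragraph in effect concedes the point by deferring to the $\mathcal{D}$-module/$V$-filtration proof of \cite{bmm}. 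As written, the proposal is therefore not a proof.

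Two further inaccuracies. First, for a stratum $S\subseteq V(f)$ the restriction $f_{|_S}$ is constant, so $T^*_{f_{|_S}}\U$ is the full conormal $T^*_S\U$, not the zero section; such strata are absent from the right-hand side because $\psi_f[-1]\Adot$ depends only on the restriction of $\Adot$ to $\U\setminus V(f)$, not because their contribution is ``subsumed'' by neighboring strata. Second, in the ``easy'' inclusion $\subseteq$, the phrase about the Kashiwara--Schapira estimate ``contracting $\ms(\Adot)$ in the $df$-direction'' is not a usable statement of that bound; the cleaner route is to refine the stratification inside $V(f)$ to achieve $a_f$ and use constructibility of $\psi_f[-1]\Adot$ with respect to the induced stratification of $V(f)$, but then you must take care not to refine the strata off $V(f)$, since replacing $S$ by smaller strata enlarges the relative conormals and yields a bound strictly weaker than the stated right-hand side --- a point your sketch does not address.
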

(Actually, the statement above does not use the full result of  \cite{bmm}, 3.4.2.)

\bigskip

Now, we are finally in a position to state and prove some results.

\section{The Main Result}

\begin{prop}\label{prop:onlyprop} Suppose that $p\in V(f, z_0)$ and that $d_pz_0\not\in \big(\relcon\big)_p$. Then, near $p$, we have an isomorphism
$$
 \psi_{f_0}[-1]\Z^\bullet_{V(z_0)}[n]\cong \psi_{\hat z_0}[-1]\psi_f[-1]\Pdot.
$$
\end{prop}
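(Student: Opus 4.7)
The plan is to apply $\hat m^*\psi_f[-1](\,\cdot\,)[-1]$ to the canonical distinguished triangle
$l_!l^*\Pdot \to \Pdot \to m_*m^*\Pdot \arrow{+1}$
on $\U$, producing a distinguished triangle on $V(f, z_0)$. By the isomorphism recalled at the end of Section 2, the rightmost term of the resulting triangle equals $\psi_{f_0}[-1]\Z^\bullet_{V(z_0)}[n]$. The proof therefore reduces to showing (a) the leftmost term vanishes in a neighborhood of $p$, and (b) the middle term is isomorphic to $\psi_{\hat z_0}[-1]\psi_f[-1]\Pdot$ there; the target isomorphism is then immediate from the collapsed triangle.

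Step (a) is formal. Since $l$ is an open immersion, $\psi_f[-1]$ commutes with $l_!$ by smooth base change, so $\psi_f[-1]l_!l^*\Pdot \cong l'_!\psi_{f\circ l}[-1]l^*\Pdot$, where $l': V(f)\setminus V(f,z_0)\hookrightarrow V(f)$ is the restriction of $l$. The images of $\hat m$ and $l'$ are complementary in $V(f)$, so a further application of base change gives $\hat m^*l'_! = 0$, and hence $\hat m^*\psi_f[-1]l_!l^*\Pdot = 0$ on all of $V(f, z_0)$.

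Step (b) is where the hypothesis enters, and is the main obstacle. By \thmref{thm:bmm}, $\ms(\psi_f[-1]\Pdot) = \relcon\cap\pi^{-1}f^{-1}(0)$ inside $T^*\U$, so the assumption $d_pz_0\not\in(\relcon)_p$ gives $d_pz_0\not\in\ms(\psi_f[-1]\Pdot)_p$; since the microsupport is closed and $z_0$ is a coordinate function, this persists in a neighborhood of $p$. Push-forward along the closed inclusion $i: V(f)\hookrightarrow\U$ preserves the microsupport (the open stratum $\U\setminus V(f)$ contributes trivially to $i_*\psi_f[-1]\Pdot$), so the same non-characteristic condition holds for $i_*\psi_f[-1]\Pdot$. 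Kashiwara and Schapira's non-characteristic vanishing then yields $\phi_{z_0}[-1]i_*\psi_f[-1]\Pdot = 0$ near $p$, and the compatibility of vanishing cycles with closed push-forward converts this into $\phi_{\hat z_0}[-1]\psi_f[-1]\Pdot = 0$ near $p$. Feeding this vanishing into the canonical specialization triangle $\hat m^*\Bdot[-1]\to\psi_{\hat z_0}[-1]\Bdot\to\phi_{\hat z_0}[-1]\Bdot\arrow{+1}$ with $\Bdot = \psi_f[-1]\Pdot$ produces the required identification of the middle term. The crucial point is that the purely conormal hypothesis on $\relcon$ becomes a microlocal non-characteristic condition for $\psi_f[-1]\Pdot$ only after invoking BMM; without that bridge, nothing obvious relates the conormal data of $f$ on $\U$ to the vanishing cycles of $\hat z_0$ on the singular space $V(f)$.
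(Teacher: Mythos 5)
Your Step~(b) is sound, and in fact gives a self-contained microlocal argument for $\phi_{\hat z_0}[-1]\psi_f[-1]\Pdot = 0$ near $p$ where the paper simply cites Theorem~3.1 of \cite{ipaaffine}; either route works. But Step~(a) contains a genuine error: nearby cycles does \emph{not} commute with $l_!$ for an open immersion $l$. The standard compatibility is with \emph{proper} push-forward (and with smooth pull-back), and ``smooth base change'' is not the relevant theorem here; it would give $l^*\psi_f\cong\psi_{f\circ l}l^*$, not $\psi_f l_!\cong l'_!\psi_{f\circ l}$. Indeed, the claimed isomorphism is false on the nose: the stalk of $\psi_f[-1]l_!l^!\Pdot$ at $x\in V(f,z_0)$ is $H^{*+n}(F_{f,x}, F_{f_0,x};\Z)$, which is supported precisely on $V(f,z_0)$ rather than vanishing there, whereas the stalk of $l'_!\psi_{f\circ l}[-1]l^!\Pdot$ at such an $x$ is identically zero. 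A concrete counterexample to your formal vanishing: take $f=z_0^2$ in $\C^2$ at the origin; then $F_{f,\0}$ is two disks and $F_{f_0,\0}=\emptyset$, so $H^*(F_{f,\0},F_{f_0,\0})\neq 0$. Of course in that example $d_\0 z_0\in(\relcon)_\0$ --- the hypothesis fails --- which is exactly the point.

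In other words, the vanishing $\hat m^*\psi_f[-1]l_!l^!\Pdot=0$ near $p$ is the other place where the hypothesis $d_pz_0\not\in(\relcon)_p$ must be invoked, and it is independent of the vanishing in your Step~(b). The paper derives both vanishings from Theorem~3.1 of \cite{ipaaffine}, after noting that $d_pz_0\not\in(\relcon)_p$ forces the relative polar set $\Gamma_{f,z_0}$ to be empty near $p$ (because the condition is open). Your overall decomposition via the two distinguished triangles is the same as the paper's; to complete the argument you need to replace the ``formal'' vanishing in Step~(a) with an appeal to that theorem, or with an independent argument that uses the hypothesis (for instance, another non-characteristic/microsupport estimate showing that $\ms(\psi_f[-1]l_!l^!\Pdot)$ avoids the zero section over $V(f,z_0)$ near $p$).
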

\begin{proof} We will use Theorem 3.1 of \cite{ipaaffine}. Since the set of $x$ such that $d_xz_0\in \big(\relcon\big)_x$ is closed, the fact that $d_pz_0\not\in \big(\relcon\big)_p$ implies that, for all $x$ near $p$, $d_xz_0\not\in \big(\relcon\big)_x$. This implies that the relative polar set $\Gamma_{f, z_0}$ is empty near $p$, which implies, by Theorem 3.1 of \cite{ipaaffine}, that, for all $x\in V(f, z_0)$ near $p$,
$$
H^*(\psi_f[-1]l_!l^!\Pdot)_x\cong H^*(F_{f, x}, F_{f_0, x}; \Z) =0,
$$
that is, near $p$,
$$
\hat m^*\psi_f[-1]l_!l^!\Pdot=0.
$$

Using this last equality, and applying the functor $\hat m^*[-1]\psi_f[-1]$ to the canonical distinguished triangle
$$
l_!l^!\Pdot\rightarrow\Pdot\rightarrow m_*m^*\Pdot\arrow{[1]}l_!l^!\Pdot,
$$
we conclude that, near $p$,
\begin{equation}
\hat m^*[-1]\psi_f[-1]\Pdot\cong \hat m^*\psi_f[-1]m_*m^*[-1]\Pdot. \tag{$\dagger$}
\end{equation}

There is also the canonical distinguished triangle relating the nearby and vanishing cycles along $\hat z_0$:
$$
\hat m^*[-1]\psi_f[-1]\Pdot\rightarrow \psi_{\hat z_0}[-1]\psi_f[-1]\Pdot\rightarrow \phi_{\hat z_0}[-1]\psi_f[-1]\Pdot\arrow{[1]}\hat m^*[-1]\psi_f[-1]\Pdot.
$$
Now Theorem 3.1 of \cite{ipaaffine} tells us that, near $p$, we also know that $\phi_{\hat z_0}[-1]\psi_f[-1]\Pdot=0$. Thus, near $p$,
\begin{equation}
\hat m^*[-1]\psi_f[-1]\Pdot\rightarrow \psi_{\hat z_0}[-1]\psi_f[-1]\Pdot.\tag{$\ddagger$}
\end{equation}

\medskip

Combining ($\dagger$) and ($\ddagger$) yields the desired result.
\end{proof}

\bigskip

We can now prove the theorem that we stated in the introduction.

\begin{thm}\label{thm:onlythm} For all $x\in V(z_0)$, if $d_xz_0\not\in \big(\relcon\big)_x$, then $\hat r_x$ induces a surjection
$$
r_x: \big(\relcon\big)_x\rightarrow \big(\overline{T^*_{f_{|_{V(z_0)}}}V(z_0)}\big)_x,
$$
such that $r_x^{-1}(0)=0$.
\end{thm}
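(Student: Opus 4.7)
My plan is to split the theorem into three assertions: $r_x^{-1}(0)=0$, the containment $\hat r_x((\overline{T^*_f\U})_x)\subseteq (\overline{T^*_{f_0}V(z_0)})_x$, and surjectivity of the induced map. The first is immediate: any $\eta=\sum_j a_j\,d_xz_j\in(\overline{T^*_f\U})_x$ with $\hat r_x(\eta)=0$ must equal $a_0\,d_xz_0$, and since the fiber is $\C$-conic and does not contain $d_xz_0$ by hypothesis, we conclude $\eta=0$. At a point $x\in V(z_0)\setminus V(f)$, $f$ is a submersion, $(\overline{T^*_f\U})_x=\C\cdot d_xf$, and the hypothesis forces $d_xf$ not to be a scalar multiple of $d_xz_0$, hence $d_xf_0\ne 0$; so $\hat r_x$ sends $d_xf$ to $d_xf_0$ and gives an isomorphism onto $(\overline{T^*_{f_0}V(z_0)})_x=\C\cdot d_xf_0$. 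Thus I may restrict attention to $x\in V(f,z_0)$.

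The core of the argument combines \propref{prop:onlyprop} with \thmref{thm:bmm}. The isomorphism from \propref{prop:onlyprop} implies that, near $p$,
\[
\ms\!\left(\psi_{f_0}[-1]\Z^\bullet_{V(z_0)}[n]\right)\ =\ \ms\!\left(\psi_{\hat z_0}[-1]\psi_f[-1]\Pdot\right).
\]
The left-hand side is computed by \thmref{thm:bmm} applied to $f_0$ on the smooth ambient $V(z_0)$: it equals $\overline{T^*_{f_0}V(z_0)}\cap\pi_0^{-1}(V(f_0))$, where $\pi_0\colon T^*V(z_0)\to V(z_0)$. For the right-hand side I will push forward via the closed inclusion $i\colon V(f)\hookrightarrow\U$, use the base-change identity $i_*\psi_{\hat z_0}[-1]\psi_f[-1]\Pdot\cong\psi_{z_0}[-1]\,i_*\psi_f[-1]\Pdot$ (nearby cycles commute with pushforward along closed immersions), and apply \thmref{thm:bmm} twice on $\U$: first, $\ms(i_*\psi_f[-1]\Pdot)=\overline{T^*_f\U}\cap\pi^{-1}(V(f))$; second, the general form yields
\[
\ms\!\left(\psi_{z_0}[-1]\,i_*\psi_f[-1]\Pdot\right)\ =\ \Big(\bigcup_{S\not\subseteq V(z_0)}\overline{T^*_{z_0|_S}\U}\Big)\cap\pi^{-1}(V(z_0)),
\]
where $S$ ranges over the strata, lying in $V(f)$, of a Whitney stratification of $\U$ adapted to $V(f)\cup V(z_0)$ whose conormals exhaust $\overline{T^*_f\U}\cap\pi^{-1}(V(f))$.

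The last step passes from the ambient microsupport in $T^*\U$ to the intrinsic one in $T^*V(z_0)$ via the identity $\ms(m_*\Bdot)=\hat r^{-1}(\ms(\Bdot))$ for the closed embedding $m\colon V(z_0)\hookrightarrow\U$; equivalently, $\hat r$ projects the ambient microsupport pointwise onto the intrinsic one. Matching the two microsupports fibrewise at $x\in V(f,z_0)$ then yields
\[
\hat r_x\!\left((\overline{T^*_f\U})_x\right)\ =\ (\overline{T^*_{f_0}V(z_0)})_x,
\]
giving both the hard containment and surjectivity.

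The principal obstacle, as I see it, is that final stratified computation: showing that $\bigcup_{S}\overline{T^*_{z_0|_S}\U}$, built from relative conormals of $z_0$ restricted to Whitney strata $S\subseteq V(f)$, projects under $\hat r$ (after closure and restriction to $\pi^{-1}V(z_0)$) onto $\overline{T^*_{f_0}V(z_0)}\cap\pi_0^{-1}(V(f_0))$. On a smooth stratum $S$ open in $V(f)\setminus\Sigma f$ this reduces to the linear-algebra identity $\overline{T^*_{z_0|_S}\U}_x=\overline{T^*_S\U}_x+\C\,d_xz_0$ together with $\ker d_xf_0=\ker d_xf\cap\ker d_xz_0$, so that $\hat r_x$ kills the $d_xz_0$ summand; but for strata inside $\Sigma f$ one must argue through closures. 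This stratified, relative-conormal bookkeeping is precisely the content that the microsupport machinery packages cleanly, and is what makes a direct geometric proof elusive.
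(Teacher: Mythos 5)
Your approach is essentially the paper's: use \propref{prop:onlyprop} to equate the two microsupports, compute each side via \thmref{thm:bmm} (the intrinsic side for $f_0$ on $V(z_0)$, the ambient side by applying the nearby-cycle microsupport formula twice, to $f$ and then to $z_0$), and translate between the intrinsic microsupport in $T^*V(z_0)$ and the ambient one in $T^*\U$ via the linear projection $\hat r$. Your preliminary observations ($r_x^{-1}(0)=0$ from $\C$-conicity; reduction to $x\in V(f,z_0)$) are fine, though the paper simply replaces $f$ by $f-f(x)$ instead of treating the locus $V(z_0)\setminus V(f)$ separately.

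However, the step you flag as the ``principal obstacle'' is a genuine gap, not a bookkeeping detail, and your phrase ``matching the two microsupports fibrewise then yields $\hat r_x((\relcon)_x)=(\overline{T^*_{f_0}V(z_0)})_x$'' overstates what the matching gives you. The matching gives
$$
\hat r_x\Big(\Big(\bigcup_{R\not\subseteq V(z_0)}\overline{T^*_{z_0|_R}\U}\Big)_x\Big)\ =\ \big(\overline{T^*_{f_0}V(z_0)}\big)_x,
$$
but to get the stated conclusion you must also show
$$
\Big(\bigcup_{R\not\subseteq V(z_0)}\overline{T^*_{z_0|_R}\U}\Big)_x\ =\ \big(\relcon\big)_x+<d_xz_0>.
$$
Your linear-algebra identity $\big(\overline{T^*_{z_0|_R}\U}\big)_y=\big(\overline{T^*_R\U}\big)_y+\C\,d_yz_0$ is valid at points $y\in R$ where $z_0|_R$ submerses, but the equality needed is over $x$, where one is taking fibers of closures; closing up can a priori introduce extra covectors, and this is exactly where your argument stops. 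The paper closes precisely this gap by citing Lemma 5.8 of \cite{enrichpolar}, which proves the displayed equality \emph{under the hypothesis that $d_xz_0\not\in\overline{T^*_R\,\U}$ for each $R$}. This is a second, load-bearing use of the hypothesis $d_xz_0\not\in(\relcon)_x$; in your write-up it appears only for $r_x^{-1}(0)=0$ and (implicitly, through \propref{prop:onlyprop}) for the microsupport isomorphism, but not at the closure step where it is indispensable.
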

\begin{proof} The final statement is trivial to prove. Suppose that $d_xz_0\not\in \big(\relcon\big)_x$; as $\big(\relcon\big)_x$ is $\C$-conic, it follows that, if $ad_xz_0\in \big(\relcon\big)_x$, then $a=0$. The statement now follows from 
$$r^{-1}_x(0)=\big(\relcon\big)_x\cap \hat r_x^{-1}(0)= \{ad_xz_0\in \big(\relcon\big)_x \ | \ a\in\C\}.
$$

\bigskip

Now, suppose that $x\in V(z_0)$ and $d_xz_0\not\in \big(\relcon\big)_x$. If $c$ is a constant, replacing $f$ by $f-c$ does not affect $\relcon$, and so we may assume that $f(x)=0$.

\bigskip

We wish to look at the equality of the microsupports of the two isomorphic complexes in  \propref{prop:onlyprop}.  While both complexes in the isomorphism in \propref{prop:onlyprop} are complexes on $V(f, z_0)$, the results that we shall use will tell us one microsupport in $T^*V(z_0)$ and the other in $T^*\U$; we shall write $\ms_{{}_{V(z_0)}}$ and $\ms_{{}_\U}$, respectively.

If $Y$ is an analytic subset of $V(z_0)\subset\U$ and $\Adot\in D^b_c(Y)$, we may consider the microsupport $\ms_{{}_{V(z_0)}}(\Adot)$ in $T^*V(z_0)$ or $\ms_{{}_{\U}}(\Adot)$ in $T^*\U$. The relationship between these is trivial:
$$
\ms_{{}_{\U}}(\Adot)=\ms_{{}_{V(z_0)}}(\Adot)+<dz_0>:=\big\{(x, \eta+ad_xz_0) \ | \ (x, \eta)\in \ms_{{}_{V(z_0)}}(\Adot), a\in\C\big\}.
$$

\bigskip

We shall always work near $x$, and finally look at precisely the fiber over $x$.

\bigskip

By \thmref{thm:bmm}, 
$$
\ms_{{}_{V(z_0)}}( \psi_{f_0}[-1]\Z^\bullet_{V(z_0)}[n]) = \overline{T^*_{f_{|_{V(z_0)}}}V(z_0)}\cap (V(f, z_0)\times \C^n),
$$
and so
\begin{equation}
\ms_{{}_{\U}}( \psi_{f_0}[-1]\Z^\bullet_{V(z_0)}[n]) =\Big( \overline{T^*_{f_{|_{V(z_0)}}}V(z_0)}\cap (V(f, z_0)\times \C^n)\Big)+<dz_0>.\tag{$*$}
\end{equation}

We must do a little more work to find $\ms(\psi_{\hat z_0}[-1]\psi_f[-1]\Pdot)$.

Again, by  \thmref{thm:bmm}, 
$$
\ms_{{}_\U}(\psi_f[-1]\Pdot) =\bigcup_{R\in\mathfrak R}\overline{T^*_R\,\U} = \relcon\cap\big(V(f)\times \C^{n+1}\big),
$$
where $\mathfrak R$ is some subset of a Whitney stratification with respect to which $\psi_f[-1]\Pdot$ is constructible. Note that since are working at points $x$ such that $d_xz_0\not\in \big(\relcon\big)_x$, if $x\in V(f)$, then, for each $R\in\mathfrak R$, $d_xz_0\not\in \overline{T^*_R\,\U}$.

Now, using the more general statement in \thmref{thm:bmm}, we have that
\begin{equation}
\ms_{{}_\U}(\psi_{\hat z_0}[-1]\psi_f[-1]\Pdot) = \Big(\bigcup_{R\not\subseteq V(z_0)}\overline{T^*_{{z_0}_{|_{R}}}\U}\Big)\cap (V(z_0)\times\C^{n+1}). \tag{$**$}
\end{equation}

By \propref{prop:onlyprop}, 
$$
\ms_{{}_\U}(\psi_{\hat z_0}[-1]\psi_f[-1]\Pdot)=\ms_{{}_{\U}}( \psi_{f_0}[-1]\Z^\bullet_{V(z_0)}[n]),
$$
and, thus, at $x$, we have
$$
\big(\overline{T^*_{f_{|_{V(z_0)}}}V(z_0)}\big)_x+<d_xz_0>= \Big(\bigcup_{R\not\subseteq V(z_0)}\overline{T^*_{{z_0}_{|_{R}}}\U}\Big)_x.
$$

By Lemma 5.8 of \cite{enrichpolar}, since for each $R\in\mathfrak R$, $d_xz_0\not\in \overline{T^*_R\,\U}$,
$$
\Big(\bigcup_{R\not\subseteq V(z_0)}\overline{T^*_{{z_0}_{|_{R}}}\U}\Big)_x \ =  \ \Big(\bigcup_{R\in\mathfrak R}\overline{T^*_R\,\U}\Big)_x+<d_xz_0> \  = \ \big(\relcon\big)_x+<d_xz_0>.
$$

Therefore, we conclude that
$$
\big(\overline{T^*_{f_{|_{V(z_0)}}}V(z_0)}\big)_x+<d_xz_0> \ = \ \big(\relcon\big)_x+<d_xz_0>,
$$
and, hence,
$$
r_x\big(\big(\relcon\big)_x\big) = r_x\big(\big(\relcon\big)_x+<d_xz_0>\big)=
$$

$$
r_x\big(\big(\overline{T^*_{f_{|_{V(z_0)}}}V(z_0)}\big)_x+<d_xz_0>\big)= \big(\overline{T^*_{f_{|_{V(z_0)}}}V(z_0)}\big).
$$

\end{proof}

\section{Concluding Remarks}

As $\relcon\subseteq \U\times\C^{n+1}$ is $\C$-conic in the cotangent coordinates, it projectivizes to $\Proj(\relcon)\subseteq\U\times \Proj^n$; in fact, for many authors, this projective object is what they mean by {\bf the} relative conormal space.

 As $r_x$ preserves scalar multiplication and as $r_x^{-1}(0)=0$,  \thmref{thm:onlythm} immediately implies a projective version of itself. We denote the projective class of $d_xz_0$ by $[d_xz_0]$.

\begin{thm}For all $x\in V(z_0)$, if $[d_xz_0]\not\in \Proj\big(\relcon\big)_x$, then $\hat r_x$ induces a surjection
$$
\check r_x: \Proj\big(\relcon\big)_x\rightarrow \Proj\big(\overline{T^*_{f_{|_{V(z_0)}}}V(z_0)}\big)_x.
$$
\end{thm}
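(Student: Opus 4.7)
The plan is to deduce this projective statement directly from \thmref{thm:onlythm}; essentially all the work has already been carried out, and what remains is to verify that $r_x$ descends to projectivizations and that the hypotheses translate correctly.

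First I would check the translation of hypotheses: the projective condition $[d_xz_0]\not\in \Proj\big(\relcon\big)_x$ is equivalent to the affine condition $d_xz_0\not\in \big(\relcon\big)_x$. This is immediate from the $\C$-conicity of $\big(\relcon\big)_x$ together with the fact that $d_xz_0\neq 0$, since a nonzero scalar multiple $ad_xz_0\in \big(\relcon\big)_x$ would, upon dividing by $a$, force $d_xz_0$ itself into $\big(\relcon\big)_x$. With the affine hypothesis now available, \thmref{thm:onlythm} supplies a surjection $r_x:\big(\relcon\big)_x\to \big(\overline{T^*_{f_{|_{V(z_0)}}}V(z_0)}\big)_x$ satisfying $r_x^{-1}(0)=\{0\}$.

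Next I would construct $\check r_x$. Since $\hat r_x$ is a $\C$-linear restriction map, $r_x$ is in particular $\C^*$-equivariant; combined with $r_x^{-1}(0)=\{0\}$ this means $r_x$ restricts to a $\C^*$-equivariant map $\big(\relcon\big)_x\setminus\{0\}\to \big(\overline{T^*_{f_{|_{V(z_0)}}}V(z_0)}\big)_x\setminus\{0\}$, and hence descends to a well-defined map $\check r_x$ on the quotients by $\C^*$, which are by definition the projectivizations. Surjectivity of $\check r_x$ then drops out of surjectivity of $r_x$: any nonzero representative of a class in the target has a preimage under $r_x$, and that preimage must itself be nonzero because $r_x(0)=0$.

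I do not anticipate a genuine obstacle here. The substantive content of the result lies entirely in \thmref{thm:onlythm}; the projective version is pure bookkeeping about $\C^*$-quotients of conic sets, and the hypothesis translation in the first paragraph is the only step that even warrants a sentence of justification.
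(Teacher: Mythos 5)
Your proposal is correct and matches the paper's approach exactly: the paper simply notes that since $r_x$ preserves scalar multiplication and $r_x^{-1}(0)=0$, Theorem~\ref{thm:onlythm} immediately yields the projective version. You have spelled out the details that the paper treats as immediate, including the useful observation that the projective and affine hypotheses are equivalent by $\C$-conicity.
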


\bigskip

Our primary interest in  \thmref{thm:onlythm} relates to the L\^e cycles and L\^e numbers of $f$; see, for instance, \cite{lecycles}. In many of our past works, the genericity condition that we required to guarantee the existence of the L\^e cycles and numbers was that the coordinates $(z_0, \dots, z_n)$ be {\it prepolar}. This is an inductive requirement on how the hyperplanes slices $V(z_i)$ intersect the strata of an $a_f$ stratification of $V(f)$. Ideally, we would like to eliminate the need to first produce an $a_f$ stratification. It turns out that \thmref{thm:onlythm}, or its projective version, is precisely the lemma that we need.

\newpage
\bibliographystyle{plainurl}

\end{document}